\newtheorem{theorem}{Theorem}[section]
\newtheorem{lem}{Lemma}[section]
\newtheorem{remark}{Remark}[section]
\newcommand{\Z}{\mathbb{Z}}
\newcommand{\R}{\mathbb{R}}
\newcommand{\C}{\mathbb{C}}
\newcommand{\F}{\mathscr{F}}
\newtheorem{MTheo}{Theorem}
\begin{document}
\title[Global solution of 2D Dirac equation]{Global $H^s, s>0$ Large Data Solutions of 2D Dirac Equation with Hartree Type Interaction}
\author{Vladimir Georgiev  and Boris Shakarov }
\address{V. Georgiev
\newline Dipartimento di Matematica Universit\`a di Pisa
Largo B. Pontecorvo 5, 56100 Pisa, Italy\\
 and \\
 Faculty of Science and Engineering \\ Waseda University \\
 3-4-1, Okubo, Shinjuku-ku, Tokyo 169-8555 \\
Japan and IMI--BAS, Acad.
Georgi Bonchev Str., Block 8, 1113 Sofia, Bulgaria}
\email{georgiev@dm.unipi.it}
\address{B.Shakarov, Gran Sasso Science Institute, Via Michele Iacobucci, 2, 67100 L'Aquila, Italy}
\email{shabor993@gmail.com}

\begin{abstract}
Local and global well - posedness of the solution to the two space dimensional Dirac equation with Hartree type nonlinearity is established with the initial datum in the space $H^s(\R^2, \C^2)$ with $s >0.$.
\end{abstract}

\maketitle

\section{Introduction}

The 2D Dirac type models have manifested an increasing role in the last years in connection with the new hypothetical field called anionic Dirac matter that is
 a type of quasiparticle that can only occur in two - dimensional systems (see for example \cite{WBB}).

  The interaction between fermions (a typical example is the electron) confined to a plane with massive bosons particles can be interpreted  as a 2D nonlinear Dirac with cubic nonlinearity of Hartree type (see \cite{chad}, \cite{Ae}). A General overview on the application of variational methods for this type of models can be found in \cite{ELS}.

In this paper we study
the 2D Dirac equation with a non - local interaction term of Hartree  type. To be more precise, we plan to study local and global well - posedness of the Cauchy problem
\begin{equation}\label{eq:base}
    \begin{cases}
    \begin{aligned}
        & -\mathrm{i}  \gamma^\mu \partial_\mu \psi + m \psi = ((b-\Delta)^{-1}|\psi|^{2}) \psi, \\
 &  \psi(0,x) = \psi_0 \in H^s(\mathbb{R}^2),
    \end{aligned}
    \end{cases}
\end{equation}
where $m>0$ is the mass of the spinor, $ s > 0$, $b > 0$, $\psi: \R^{1 + 2} \rightarrow \C^2 $, the Dirac matrices $\gamma^\mu$ are given by
\begin{equation*}
\gamma^0 = \begin{pmatrix} 1 & 0\\ 0 & -1 \end{pmatrix}, \hspace{1cm} \gamma^1 = \begin{pmatrix} 0 & i\\ i & 0 \end{pmatrix}, \hspace{1cm} \gamma^2 = \begin{pmatrix} 0 & 1\\ -1 & 0 \end{pmatrix}
\end{equation*}
and $\langle \cdot , \cdot \rangle$ is the inner product in $\mathbb{C}^2$.
We use the standard summation rule for repeated indices as well the classical rule of raising and lowering indices by the aid of the metric $\mathrm{diag}(1,-1-1)$  so that $$\partial^0=\partial_0= \partial_t, -\partial^j=\partial_j = \partial_{x_j}, j = 1,2.$$
This Cauchy problem was treated in \cite{T} and the main result of that work is the global existence and scattering with an initial datum
which has sufficiently small Sobolev norm $H^s(\R^2)$ with $s >0$.

Our main goal is to improve this result and establish local and global well - posedness with initial data in $H^s(\R^2)$ without the smallness assumption.

For the case of the local interaction of the type $( \langle \gamma^0 \psi,  \psi \rangle) \psi$ one can see the corresponding results in \cite{herr} where the initial datum is in $H^{\frac{1}{2}}.$

In order to state our main result, we can rewrite the Cauchy problem \eqref{eq:base} in the equivalent form
\begin{equation} \label{eq:base1}
    \begin{cases}
    \begin{aligned}
  &   \mathrm{i}   \partial_t \psi =  \mathcal{D}_m \psi - ((b-\Delta)^{-1} |\psi|^2) \gamma^0\psi, \\
 &  \psi(0,x) = f \in H^s(\mathbb{R}^2),
    \end{aligned}
    \end{cases}
\end{equation}{}
where
\begin{equation*}
    \mathcal{D}_m = - \mathrm{i} \alpha^j \partial_j + m \gamma^0
\end{equation*}
is a self-adjoint operator with the domain $H^1(\mathrm{R}^2)$  due to the fact that the matrices
$$ \alpha^1 = \gamma^0 \gamma^1 =\begin{pmatrix} 0 & i\\ -i & 0 \end{pmatrix} \hspace{0.5cm} \alpha^2 = \gamma^0 \gamma^2 = \begin{pmatrix} 0 & 1\\ 1 & 0 \end{pmatrix},$$
and $\gamma^0$ are self - adjoint. Thus, we look for a solution $\psi \in C([0,\infty);H^s(\R^2))$ satisfying the integral equation
\begin{equation}\label{eq:base2}
  \psi(t) = e^{-\mathrm{i}t\mathcal{D}_m} \psi_0  + \mathrm{i}\int_0^t e^{-\mathrm{i}(t-s)\mathcal{D}_m} ((b - \Delta)^{-1} |\psi|^2) \gamma^0\psi(s) ds,
\end{equation}
\begin{MTheo} \label{TMain} [Global existence]
 For any $s >0$ and any $\psi_0(x) \in H^s(\mathbb{R}^2) $ there exists a unique solution $\psi(t,x) \in C([0,\infty);H^s(\R^2))$ to integral equation \eqref{eq:base2}.
 Moreover, for any $t \in [0,\infty)$,
\begin{equation} \| \psi(t,\cdot) \|_{L^2 (\R^2)} = \|\psi_0\|_{L^2(\R^2)}.
\end{equation}
\end{MTheo}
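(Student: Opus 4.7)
First, I would set up the standard fixed-point machinery. Since $\mathcal{D}_m^2 = -\Delta + m^2$, the Dirac propagator $e^{-\mathrm{i}t\mathcal{D}_m}$ inherits the Strichartz estimates of the 2D Klein--Gordon group, and the Duhamel formulation \eqref{eq:base2} can be treated in a resolution space of the form $Y^s_T = C([0,T]; H^s(\R^2)) \cap X^s_T$, where $X^s_T$ is a Strichartz space adapted to the 2D Klein--Gordon flow. The contraction argument then reduces to a trilinear estimate
\[
\left\|\int_0^t e^{-\mathrm{i}(t-\tau)\mathcal{D}_m}\bigl((b-\Delta)^{-1}|\psi|^2\bigr)\gamma^0\psi(\tau)\,d\tau\right\|_{Y^s_T} \leq C\,T^{\theta}\|\psi\|_{Y^s_T}^3
\]
for some $\theta>0$. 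The two--derivative gain provided by $(b-\Delta)^{-1}$ makes the Hartree nonlinearity considerably tamer than a pointwise cubic one, and it yields a local existence time $T_{*}=T_{*}(\|\psi_0\|_{H^s})>0$, essentially along the lines of \cite{T} but now without the smallness restriction.

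Second, I would prove the $L^2$ conservation identity directly from \eqref{eq:base1}. Setting $V(t,x)=(b-\Delta)^{-1}|\psi(t,x)|^2$, which is real valued, both $\mathcal{D}_m$ and the multiplication operator $V\gamma^0$ are self-adjoint on $L^2(\R^2,\C^2)$. Pairing the equation with $\psi$ and taking real parts gives $\frac{d}{dt}\|\psi(t)\|_{L^2}^2=0$, first for smooth approximating data and then, by density and continuous dependence from Step~1, for arbitrary $H^s$ data.

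Third, to pass from local to global existence I would argue by the blow-up alternative: if the maximal time $T^{*}$ is finite, then $\|\psi(t)\|_{H^s}\to\infty$ as $t\uparrow T^{*}$. To rule this out I would exploit the strong smoothing of the Hartree potential in two space dimensions: the Bessel kernel of $(b-\Delta)^{-1}$ lies in $L^p(\R^2)$ for every $p\in[1,\infty)$, so by Young's inequality
\[
\|V(t)\|_{L^p(\R^2)} \;\leq\; C_p\,\bigl\|\,|\psi(t)|^2\bigr\|_{L^1(\R^2)} \;=\; C_p\,\|\psi_0\|_{L^2(\R^2)}^2
\]
uniformly in $t$, using the $L^2$ conservation just established. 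Feeding this, together with a fractional Leibniz rule, into the $H^s$ norm of the Duhamel formula \eqref{eq:base2} should produce a Gronwall inequality of the form
\[
\|\psi(t)\|_{H^s} \;\leq\; \|\psi_0\|_{H^s} + C(\|\psi_0\|_{L^2})\int_0^t \|\psi(\tau)\|_{H^s}\,d\tau,
\]
giving at-most-exponential growth of $\|\psi(t)\|_{H^s}$ on every finite interval and contradicting finite-time blow-up.

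The main obstacle I expect is closing this last estimate across the whole range $s>0$ using only the $L^2$ bound. For $0<s<1/2$ the solution is not automatically in $L^4(\R^2)$, so one cannot simply bound $\|V\|_{L^\infty}$ by $\|\psi\|_{L^4}^2$; instead one must distribute carefully the two derivatives gained from $(b-\Delta)^{-1}$, combining the embedding $H^{s'}(\R^2)\hookrightarrow L^{2+\delta}(\R^2)$ for a small $s'\in(0,s]$, a Kato--Ponce-type commutator estimate for the product $V\gamma^0\psi$, and the identity $\|V\|_{H^\sigma}=\bigl\|\,|\psi|^2\bigr\|_{H^{\sigma-2}}$ to convert derivative norms of $V$ into low-order norms of $\psi$. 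Handling $|\psi|^2$ in a Sobolev space of negative regularity (needed whenever $\sigma<2$) via paraproduct decomposition or duality seems to be the genuinely delicate technical point.
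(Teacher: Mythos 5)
Your Step 3 contains the decisive gap: the claimed Gronwall inequality $\|\psi(t)\|_{H^s} \leq \|\psi_0\|_{H^s} + C(\|\psi_0\|_{L^2})\int_0^t \|\psi(\tau)\|_{H^s}\,d\tau$, with a constant depending only on the conserved $L^2$ norm, cannot be closed by the means you indicate, and it is strictly stronger than what is true as far as this paper can prove. To bound $\|V\gamma^0\psi\|_{H^s}$ linearly in $\|\psi\|_{H^s}$ via a fractional Leibniz rule you need endpoint control of the potential, namely $\|V\|_{L^\infty}$ (for the low--high part $T_V\psi$) and $\|(1-\Delta)^{s/2}V\|_{L^{2/s}}$; equivalently the mappings $(1-\Delta)^{-1}:L^1(\R^2)\to L^\infty(\R^2)$ and $(1-\Delta)^{-1+s/2}:L^1(\R^2)\to L^{2/s}(\R^2)$. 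Both fail in two dimensions: the Bessel kernel of $(1-\Delta)^{-1}$ has a $\log(1/|x|)$ singularity at the origin, so, exactly as you observe, it lies in every $L^p$ with $p<\infty$ but \emph{not} in $L^\infty$ --- and $L^\infty$ is what the product estimate requires. Putting $V$ merely in $L^p$, $p<\infty$, forces $\psi$ into some $L^q$ with $q>2$, whose norm is not controlled by $\|\psi_0\|_{L^2}$, so the constant in your Gronwall inequality inevitably re-absorbs a piece of $\|\psi\|_{H^s}$. The paper quantifies this unavoidable loss as a logarithm: its Brezis--Gallou\"et type inequality \eqref{eq:result1} (Theorem \ref{T1}) gives $\|V\|_{L^\infty}+\|(1-\Delta)^{s/2}V\|_{L^{2/s}} \lesssim \|\psi\|_{L^2}^2\log\bigl(2+\|\psi\|_{H^s}^2/\|\psi\|_{L^2}^2\bigr)$, using $\||\psi|^2\|_{B^s_{1,\infty}}\lesssim\|\psi\|_{H^s}^2$. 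The resulting differential inequality is $y'\lesssim y\log(2+y)$, which yields the \emph{double}-exponential bound \eqref{superexpo growth} (cf.\ Theorem \ref{TMain1}), not the single-exponential growth your linear Gronwall would imply; indeed the paper obtains a plain exponential bound only for the weaker kinetic-energy form $\langle \mathcal{D}_m\psi,\psi\rangle_{L^2}$, by a separate argument optimizing in $\varepsilon$. The repair is to insert the logarithm: $y'\lesssim y\log(2+y)$ still has no finite-time blow-up, so your blow-up-alternative scheme survives, but the paraproduct/Kato--Ponce program you sketch for $0<s<1/2$ cannot remove the log, only reproduce it.

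Two secondary points of divergence. First, the paper needs no Strichartz spaces at all: since $e^{-it\mathcal{D}_m}$ is unitary on $H^s$, the contraction closes in plain $C([0,T];H^s)$ once the logarithmic potential estimate is available, so your trilinear Klein--Gordon--Strichartz estimate is unnecessary machinery (and establishing it uniformly for all $s>0$ would run into the same endpoint issue for the potential). Second, for mass conservation the paper deliberately avoids pairing the equation with $\psi$: at regularity $H^s$ with small $s$, the pairing $\langle\partial_t\psi,\psi\rangle$ is not directly defined, so it uses Ozawa's trick (Lemma \ref{l:m1}), expanding $\|\psi(t)\|_{L^2}^2$ from the Duhamel formula and exploiting that $\langle F(\psi),\psi\rangle_{L^2}$ is purely real; your density-plus-continuous-dependence route can be made rigorous but requires an $L^2$-level well-posedness statement your Step 1 would have to supply explicitly.
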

The key point in the proof is the following Brezis-Gallou\"et Type Inequality (see \cite{BrGa}):
\begin{MTheo}  \label{T1} For any $b > 0$ and $s > 0$ there exists a constant $C=C(b,s)>0$ such that, for any $f \in B^s_{1,\infty}(\mathbb{R}^2) $ the following inequality is true
  \begin{equation} \label{eq:result1}
  \begin{split}
\| ( b - \Delta)^{-1} f \|_{L^\infty(\R^2)}   + \| ( b - \Delta) ^{-1} (1-\Delta)^{s/2}f   \|_{L^\frac{2}{s}(\R^2)} \leq C \|f\|_{L^1(\R^2)} \log \left( 2 + \frac{\|f\|_{B^s_{1,\infty}(\R^2)}}{\|f\|_{L^1(\R^2)}}\right) .
\end{split}
\end{equation}
\end{MTheo}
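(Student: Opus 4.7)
The plan is to combine a dyadic Littlewood--Paley decomposition of $f$ with Bernstein's inequality, and then to extract the logarithm by splitting the resulting frequency sum at a threshold tuned to the ratio $\|f\|_{B^s_{1,\infty}}/\|f\|_{L^1}$. This is the standard Brezis--Gallou\"et mechanism: for $\sigma\in\{0,s\}$ the multiplier $(1+|\xi|^2)^{\sigma/2}/(b+|\xi|^2)$ decays like $|\xi|^{\sigma-2}$ at infinity, which lies exactly at the endpoint of Hardy--Littlewood--Sobolev smoothing from $L^1$ into $L^\infty$ (respectively $L^{2/s}$) in dimension two, so one expects a genuine logarithmic failure unless a slightly stronger hypothesis on $f$ (here $f\in B^s_{1,\infty}$) is imposed.

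First I would write $f=\sum_{j\geq -1}\Delta_j f$ with $\Delta_j$ the standard projections onto $|\xi|\sim 2^j$, and observe that $(b-\Delta)^{-1}(1-\Delta)^{\sigma/2}\Delta_j f$ remains frequency-localized on the same annulus. A routine estimate on the kernel of the smoothly truncated symbol gives $L^1\to L^1$ boundedness with norm comparable to $(1+2^{2j})^{\sigma/2}/(b+2^{2j})$, and Bernstein's $L^1\to L^q$ inequality contributes a factor $2^{2j(1-1/q)}$ for $(\sigma,q)\in\{(0,\infty),(s,2/s)\}$. In both cases the exponents conspire cleanly, so that for $j\geq 0$ one obtains the uniform frequency-by-frequency bound
\begin{equation*}
\bigl\|(b-\Delta)^{-1}(1-\Delta)^{\sigma/2}\Delta_j f\bigr\|_{L^q(\R^2)}\ \lesssim\ \frac{2^{2j}}{b+2^{2j}}\,\|\Delta_j f\|_{L^1}\ \lesssim\ \|\Delta_j f\|_{L^1},
\end{equation*}
while for $j=-1$ the analogous prefactor is harmlessly bounded by a constant depending only on $b$ and $s$, so the low-frequency contribution is absorbed into $\|f\|_{L^1}$.

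Summing the high-frequency pieces by the triangle inequality and splitting at an integer threshold $N\geq 0$, I would use $\|\Delta_j f\|_{L^1}\lesssim \|f\|_{L^1}$ for $0\leq j\leq N$ and $\|\Delta_j f\|_{L^1}\leq 2^{-sj}\|f\|_{B^s_{1,\infty}}$ for $j>N$, producing a total bound of the form $\lesssim \|f\|_{L^1}+N\,\|f\|_{L^1}+2^{-sN}\|f\|_{B^s_{1,\infty}}$. Choosing $N\simeq s^{-1}\log_2\!\bigl(2+\|f\|_{B^s_{1,\infty}}/\|f\|_{L^1}\bigr)$ balances the last two contributions and yields the stated inequality. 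I expect the only nuisance to be careful book-keeping of the low frequencies and of the $(1+2^{2j})^{\sigma/2}$ factor so that the final constant depends only on $b$ and $s$ and the logarithm involves precisely the ratio of Besov and $L^1$ norms; the conceptual point that a logarithm is unavoidable is already forced by the endpoint nature of the target estimate, and there is no need to work with the $L^{2/s}$ side via any more delicate Littlewood--Paley square function argument since the triangle inequality is absorbed by the logarithm.
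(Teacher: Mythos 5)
Your proposal is correct and takes essentially the same route as the paper's proof: a dyadic Littlewood--Paley decomposition, a uniform frequency-by-frequency bound $\|(b-\Delta)^{-1}(1-\Delta)^{\sigma/2}\Delta_j f\|_{L^q}\lesssim \|\Delta_j f\|_{L^1}$ obtained from the multiplier size on each annulus combined with Bernstein (the paper states these as two separate lemmas), and a split of the dyadic sum at a threshold $N\simeq s^{-1}\log_2\bigl(2+\|f\|_{B^s_{1,\infty}}/\|f\|_{L^1}\bigr)$, which is precisely the paper's choice of $[M]$ with $sM=\ln\bigl(2+\|f\|_{B^s_{1,\infty}}/\|f\|_{L^1}\bigr)$. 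Your handling of the low-frequency block and the balancing of $N\|f\|_{L^1}$ against $2^{-sN}\|f\|_{B^s_{1,\infty}}$ matches the paper's argument, so there is nothing substantive to add.
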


Concerning the behavior of energy type norm $\|\psi (t)\|_{H^\frac{1}{2}(\R^2)}$ we can show double exponential growth for the Dirac case, namely we have the following.
\begin{MTheo} \label{TMain1} [Growth of $H^{1/2}$ norm]
 For any $\psi_0(x) \in H^{1/2}(\mathbb{R}^2) $ there exist $C_1(\| \psi_0\|_{H^s}) >0$ and $C_2>0$ such that the solution $\psi(t,x) \in C([0,\infty);H^{1/2}(\R^2))$ to integral equation \eqref{eq:base2} satisfies the estimate
\begin{equation} \label{supexp estm}
    \|\psi (t)\|_{H^\frac{1}{2}(\R^2)} \leq e^{C_1 e^{C_2 t}}.
\end{equation}
\end{MTheo}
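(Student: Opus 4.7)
\textbf{Plan for the proof of Theorem~\ref{TMain1}.}
The plan is to establish a differential inequality of the form
\begin{equation*}
\frac{d}{dt}\|\psi(t)\|_{H^{1/2}}^2 \;\leq\; C \, \log\!\bigl(2 + \|\psi(t)\|_{H^{1/2}}^2\bigr)\,\|\psi(t)\|_{H^{1/2}}^2,
\end{equation*}
where $C$ depends only on $b$ and $\|\psi_0\|_{L^2}$. Setting $y(t)=\log(2+\|\psi(t)\|_{H^{1/2}}^2)$ this becomes $y'\leq Cy$, so Gronwall gives $y(t)\le y(0)e^{Ct}$, i.e.\ $\|\psi(t)\|_{H^{1/2}}^2 \le \exp(y(0)e^{Ct})\le e^{C_1 e^{C_2 t}}$, which is exactly \eqref{supexp estm}, with $C_1=C_1(\|\psi_0\|_{H^{1/2}})$ and $C_2$ depending only on $b$ and $\|\psi_0\|_{L^2}$.

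First I would derive the energy identity in $H^{1/2}$. Pairing \eqref{eq:base1} with $(1-\Delta)^{1/2}\psi$ in $L^2$ and taking the real part, the linear part drops out because $\mathcal{D}_m$ is self-adjoint and commutes with the Bessel potential $(1-\Delta)^{1/4}$; denoting $V:=(b-\Delta)^{-1}|\psi|^2$ we obtain
\begin{equation*}
\tfrac{1}{2}\tfrac{d}{dt}\|\psi\|_{H^{1/2}}^2 \;=\; -\mathrm{Im}\,\bigl\langle (1-\Delta)^{1/4}(V\gamma^0\psi),(1-\Delta)^{1/4}\psi\bigr\rangle_{L^2},
\end{equation*}
and Cauchy--Schwarz gives $|\frac{d}{dt}\|\psi\|_{H^{1/2}}^2|\leq 2\|V\psi\|_{H^{1/2}}\|\psi\|_{H^{1/2}}$. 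A fractional Leibniz (Kato--Ponce) rule applied with the two H\"older pairs $(\infty,2)$ and $(4,4)$ yields
\begin{equation*}
\|V\psi\|_{H^{1/2}} \;\lesssim\; \|V\|_{L^\infty}\|\psi\|_{H^{1/2}} + \|(1-\Delta)^{1/4}V\|_{L^4}\|\psi\|_{L^4}.
\end{equation*}

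Next I would invoke Theorem~\ref{T1} with $s=\tfrac12$ applied to $f=|\psi|^2$. This requires the product estimate $\||\psi|^2\|_{B^{1/2}_{1,\infty}(\R^2)}\lesssim \|\psi\|_{L^2}\|\psi\|_{H^{1/2}}$, obtained via a Bony paraproduct decomposition $|\psi|^2=T_{\bar\psi}\psi+T_\psi\bar\psi+R(\bar\psi,\psi)$, using $\|P_j(T_fg)\|_{L^1}\lesssim\|f\|_{L^2}\|P_jg\|_{L^2}$ for the paraproducts and summing a geometric series in $2^{(j-k)s}$ ($s>0$) for the resonant part, together with the embedding $H^{1/2}\hookrightarrow B^{1/2}_{2,\infty}$. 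Combined with the $L^2$--conservation $\|\psi(t)\|_{L^2}=\|\psi_0\|_{L^2}$ of Theorem~\ref{TMain} and $\||\psi|^2\|_{L^1}=\|\psi\|_{L^2}^2$, Theorem~\ref{T1} delivers
\begin{equation*}
\|V\|_{L^\infty}+\|(1-\Delta)^{1/4}V\|_{L^4} \;\lesssim\; \|\psi_0\|_{L^2}^{\,2}\,\log\!\Bigl(2+\tfrac{\|\psi\|_{H^{1/2}}}{\|\psi_0\|_{L^2}}\Bigr).
\end{equation*}
Using the $2$-dimensional Sobolev embedding $\|\psi\|_{L^4}\lesssim\|\psi\|_{H^{1/2}}$ to absorb the second term on the right of the Leibniz bound, one arrives at the desired inequality $\frac{d}{dt}\|\psi\|_{H^{1/2}}^2\lesssim \log(2+\|\psi\|_{H^{1/2}}^2)\|\psi\|_{H^{1/2}}^2$, and the ODE argument above then concludes.

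The main obstacle is the careful execution of step~2--3: one has to be sure that a Kato--Ponce estimate at the borderline regularity $s=\tfrac12$ in dimension two is available in the precise form used, and that the paraproduct decomposition of $|\psi|^2$ produces exactly the Besov norm appearing on the right-hand side of Theorem~\ref{T1}, so that the logarithm of the ratio $\|f\|_{B^{1/2}_{1,\infty}}/\|f\|_{L^1}$ becomes effectively $\log(2+\|\psi\|_{H^{1/2}}/\|\psi_0\|_{L^2})$. Once this matching is secured, the rest of the argument is standard Gronwall bookkeeping.
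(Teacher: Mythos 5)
Your proposal is correct and follows essentially the same route as the paper: there, \eqref{supexp estm} is just the case $s=\tfrac12$ of the bound \eqref{superexpo growth}, obtained in the global-existence proof from the differential inequality $\frac{d}{dt}\|\psi\|_{H^s}\leq C\log\left(2+\|\psi\|_{H^s}\right)\|\psi\|_{H^s}$, which rests on exactly your ingredients — the Brezis--Gallou\"et estimate of Theorem~\ref{T1}, the Leibniz/H\"older splitting of \eqref{eq,b} (whose exponents $L^{2/s}, L^{2/(1-s)}$ become your $(4,4)$ pair at $s=\tfrac12$), $L^2$ conservation, and a logarithmic Gronwall argument. Your paraproduct refinement $\||\psi|^2\|_{B^{1/2}_{1,\infty}}\lesssim\|\psi\|_{L^2}\|\psi\|_{H^{1/2}}$ is slightly sharper than the paper's estimate \eqref{Bs norm}, which gives $\|\psi\|_{H^{1/2}}^2$, but this changes nothing in the conclusion.
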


Since the kinetic energy of the Dirac equation is determined by the indefinite form
$$ \langle \mathcal{D}_m \psi(t), \psi(t) \rangle_{L^2}$$
we can obtain the following better exponential bound.
\begin{MTheo} \label{TMain2} [Growth of the kinetic energy]
 For any $\psi_0 (x) \in H^{1/2}(\mathbb{R}^2) $, there exists a constant $C= C( \| \psi_0\|_{H^\frac{1}{2}}) >0$ such that the solution $\psi(t,x) \in C([0,\infty);H^{1/2}(\R^2))$ to integral equation \eqref{eq:base2} satisfies the estimate
\begin{equation} \label{supexp estmm}
 \left|   \langle \mathcal{D}_m \psi(t), \psi(t) \rangle_{L^2} \right| \lesssim e^{Ct}.
\end{equation}
\end{MTheo}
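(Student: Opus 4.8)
The plan is to control the real quantity $K(t):=\langle \mathcal{D}_m\psi(t),\psi(t)\rangle_{L^2}$ through a differential identity rather than through the full $H^{1/2}$ norm. Since $\mathcal{D}_m$ and $\gamma^0$ are self-adjoint, differentiating and inserting \eqref{eq:base1} gives
\[
\frac{d}{dt}K(t)=2\,\mathrm{Re}\,\langle \mathcal{D}_m\psi,\partial_t\psi\rangle_{L^2}
=2\,\mathrm{Re}\,\langle \mathcal{D}_m\psi,-\mathrm{i}\mathcal{D}_m\psi+\mathrm{i}V\gamma^0\psi\rangle_{L^2},
\qquad V=(b-\Delta)^{-1}|\psi|^2 .
\]
The first contribution is a multiple of $\mathrm{i}\|\mathcal{D}_m\psi\|_{L^2}^2$, hence purely imaginary, and drops out under $\mathrm{Re}$. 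This single cancellation is exactly where the indefiniteness of the form is used: it removes the ``$H^1$ mass'' responsible for the double exponential in Theorem~\ref{TMain1}. What survives is $\frac{d}{dt}K=-2\,\mathrm{Im}\langle \mathcal{D}_m\psi,V\gamma^0\psi\rangle_{L^2}$, and using $\gamma^0\mathcal{D}_m=-\mathrm{i}\gamma^j\partial_j+m$ together with the fact that $\langle\gamma^0\psi,\psi\rangle$ is pointwise real, this can be rewritten as $\frac{d}{dt}K=\int_{\R^2}V\,\partial_t\langle\gamma^0\psi,\psi\rangle\,dx$.

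A direct estimate of this right-hand side costs a full derivative of $\psi$ and is useless at regularity $H^{1/2}$, so I would shift the derivative by integrating by parts in time:
\[
K(t)=K(0)+\Big[\int_{\R^2}V\,\langle\gamma^0\psi,\psi\rangle\,dx\Big]_0^t-\int_0^t\!\!\int_{\R^2}(\partial_sV)\,\langle\gamma^0\psi,\psi\rangle\,dx\,ds .
\]
The boundary term is harmless, $\big|\int V\langle\gamma^0\psi,\psi\rangle\big|\le \|V\|_{L^\infty}\|\psi\|_{L^2}^2$, and $\|\psi\|_{L^2}$ is conserved by Theorem~\ref{TMain}. Applying Theorem~\ref{T1} with $f=|\psi|^2$ and recalling $\||\psi|^2\|_{L^1}=\|\psi\|_{L^2}^2$, one obtains $\|V\|_{L^\infty}\lesssim \log\big(2+\text{(a fixed power of }\|\psi\|_{H^{1/2}})\big)$; feeding in the double-exponential bound \eqref{supexp estm} of Theorem~\ref{TMain1} collapses the logarithm of a double exponential into a single exponential, so the boundary term is $\lesssim e^{Ct}$. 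This is precisely the gain bought by the indefinite form.

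The heart of the matter, and the step I expect to be the main obstacle, is the running remainder $\int_0^t\!\int(\partial_sV)\langle\gamma^0\psi,\psi\rangle$. Here I would exploit conservation of charge: with the current $J^j=\langle\alpha^j\psi,\psi\rangle$ the continuity equation $\partial_s|\psi|^2=-\partial_jJ^j$ holds, so $\partial_sV=-\partial_j(b-\Delta)^{-1}J^j$ and the remainder becomes $-\int_0^t\!\int \partial_j\big((b-\Delta)^{-1}\langle\gamma^0\psi,\psi\rangle\big)J^j$. The one derivative now falls on a potential of the smoothing operator $(b-\Delta)^{-1}$, but it cannot be estimated in $L^\infty$: a gradient of $(b-\Delta)^{-1}$ applied to an $L^1$ density is unbounded, so the crude $L^\infty$--$L^1$ pairing fails, while any regularity-symmetric pairing reintroduces a positive power of the double-exponentially growing norm $\|\psi\|_{H^{1/2}}$ and would only recover a double exponential. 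The decisive point is therefore to estimate this remainder again by a single exponential, which I would attempt through the \emph{second} estimate of Theorem~\ref{T1} --- the $L^{2/s}$ bound for $(b-\Delta)^{-1}(1-\Delta)^{s/2}$, itself single-exponential because its right-hand side is the conserved $L^1$ norm times a logarithm --- paired against $J^j$ and using the divergence structure so that only the conserved mass and a logarithmic factor remain. Making this last pairing lossless in the growing norm is the crux; once it is in place, summing the three contributions yields $\big|\langle \mathcal{D}_m\psi(t),\psi(t)\rangle_{L^2}\big|\lesssim e^{Ct}$.
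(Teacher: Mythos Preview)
Your approach is genuinely different from the paper's, and the step you flag as ``the crux'' is a real gap that I do not see how to close with the tools you propose.

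The paper does not differentiate $K(t)$ at all. Instead it uses conservation of the total energy
\[
E(\psi)=\tfrac{1}{2}\langle \mathcal{D}_m\psi,\psi\rangle_{L^2}-\tfrac{1}{4}\bigl\|(1-\Delta)^{-1/2}|\psi|^2\bigr\|_{L^2}^2,
\]
so that $|K(t)|\lesssim 1+\|(1-\Delta)^{-1/2}|\psi|^2\|_{L^2}^2$. The potential term is then estimated, for every $\varepsilon\in(0,1)$, by Young's inequality with the Bessel kernel $G_1$ and interpolation: $\|(1-\Delta)^{-1/2}|\psi|^2\|_{L^2}\lesssim \|G_1\|_{L^{p(\varepsilon)}}\|\psi\|_{L^{2+2\varepsilon}}^2\lesssim \varepsilon^{-c}\,\|\psi\|_{H^{1/2}}^{4\varepsilon/(1+\varepsilon)}$. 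Feeding in the double-exponential bound from Theorem~\ref{TMain1} and \emph{then choosing $\varepsilon\sim e^{-C_2 t}$} turns the double exponential into a single one. The whole argument is an $\varepsilon$--optimisation against the already-known growth of $\|\psi\|_{H^{1/2}}$; Theorem~\ref{T1} plays no role here.

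In your scheme, after the integration by parts in time and the use of the continuity equation you are left with
\[
R(s)=\int_{\R^2}\partial_j\bigl((b-\Delta)^{-1}\langle\gamma^0\psi,\psi\rangle\bigr)\,\langle\alpha^j\psi,\psi\rangle\,dx,
\]
a quartic expression carrying \emph{one} derivative through the order $-1$ operator $\partial_j(b-\Delta)^{-1}$. This operator does not map $L^1\to L^\infty$ (its kernel behaves like $|x|^{-1}$ near the origin), so the $L^\infty$--$L^1$ pairing you used for the boundary term is unavailable. Any non-endpoint H\"older splitting forces $J^j$ into $L^{p'}$ with $p'>1$, hence costs $\|\psi\|_{L^{2p'}}^2$ with $2p'>2$, i.e.\ a \emph{positive} power of $\|\psi\|_{H^{1/2}}$; the companion factor $\|\partial_j(b-\Delta)^{-1}f\|_{L^p}$ likewise requires $f$ in $L^q$ with $q>1$. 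The second estimate in Theorem~\ref{T1} does not help: with $s=\tfrac12$ it controls $(b-\Delta)^{-1}(1-\Delta)^{1/4}f$ in $L^4$, not $\nabla(b-\Delta)^{-1}f$, and passing from one to the other reinstates a half-derivative on $\psi$. The upshot is that every estimate of $R(s)$ contains a factor $\|\psi(s)\|_{H^{1/2}}^{\alpha}$ with some $\alpha>0$; integrating $\int_0^t e^{\alpha C_1 e^{C_2 s}}\,ds$ returns a double exponential, and you are back to Theorem~\ref{TMain1}. The missing idea is precisely the energy identity, which replaces the order $-1$ operator by the order $-1$ \emph{squared} structure $\|(1-\Delta)^{-1/2}|\psi|^2\|_{L^2}^2$ and thereby makes the exponent of $\|\psi\|_{H^{1/2}}$ adjustable.
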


We will use the following notations:
\begin{itemize}
\item $ f(t) \lesssim g(t)$ means that there exists a constant $C>0$, possibly depending on some fixed values but independent of $t$, such that $f(t) \leq Cg(t)$;
\item $H^s := H^s(\R^2,\C^2)$ with $s > 0$. In the same way $L^p:= L^p(\R^2,\C^2)$;
\item Throughout the equations, we use $C$ for positive constants coming from various known inequalities. With abuse of notation, $C$ can change.
\item We define the Fourier transform and the Anti-Fourier transform in the usual way: $\hat{\psi}(\xi) = \int e^{-i x \cdot \xi} \psi(x) dx$ and $\F^{-1}(\psi(x)) = \frac{1}{4\pi^2}\int e^{i x \cdot \xi} \psi(\xi) d\xi$.
\item Let $\hat{\rho}(\xi) $ be a radial, positive, Schwartz function, equal to $1$ in $1 \leq |\xi| \leq 2$, to $0$ for $|\xi|\leq 1 - \frac{1}{7}$ and $|\xi|\geq 2 + \frac{1}{7}$ and such that $\sum_{j \in \Z} \hat{\rho}( 2^{-j} \xi) = 1$. We define $\hat{\rho}_{(0)}(\xi) = \sum_{j \leq 0} \hat{\rho}( 2^{-j} \xi)$ Let $\psi$ be a Schwartz function. We define $\psi_{(j)} = \F^{-1} ( \hat{\rho}( 2^{-j} \xi) \hat{\psi}(\xi))$ and  $\psi_{(0)} = \F^{-1} ( \hat{\rho}_{(0)}(\xi) \hat{\psi}(\xi))$
 The Besov space $B^s_{p,q}$ is defined as the semi-normed space of functions such that
\begin{equation*}
\| \psi \|_{B^s_{p,q}} = \| \psi_{(0)} \|_{L^p} + \left( \sum_{j=1}^\infty (2^{js} \|\psi_{(j)}\|_{L^p})^q \right)^\frac{1}{q} < \infty,
\end{equation*}
following substantially the definition found in \cite{Graf}.
\end{itemize}
\section{Local and Global Existence}

\begin{theorem} [Local Existence] \label{local existence} For any $s > 0$ and any $\psi_0 \in H^s(\R^2)$, there exists a time $T=T(\| \psi_0\|_{H^s})>0$ and a unique solution $\psi \in C([0,T);H^s(\R^2))$ to the equation \eqref{eq:base2}.
\end{theorem}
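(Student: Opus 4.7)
The strategy is a standard Banach fixed-point argument for the Duhamel map
\begin{equation*}
\Phi(\psi)(t) = e^{-\I t\mathcal{D}_m}\psi_0 + \I\int_0^t e^{-\I(t-\sigma)\mathcal{D}_m}\bigl((b-\Delta)^{-1}|\psi|^2\bigr)\gamma^0\psi(\sigma)\,d\sigma
\end{equation*}
on the closed ball
\begin{equation*}
X_{T,R} = \bigl\{ \psi \in C([0,T];H^s) : \|\psi\|_{L^\infty_T H^s} \leq R \bigr\}
\end{equation*}
equipped with the $L^\infty_T H^s$ distance, taking $R = 2\|\psi_0\|_{H^s}$. Self-adjointness of $\mathcal{D}_m$ on $H^1(\R^2)$ ensures that the group $e^{-\I t\mathcal{D}_m}$ is an isometry on every $H^s$, so the first step is to reduce the fixed-point problem to a pointwise-in-time bound on the nonlinearity $N(\psi) = ((b-\Delta)^{-1}|\psi|^2)\gamma^0\psi$, namely
\begin{equation*}
\|\Phi(\psi)\|_{L^\infty_T H^s} \leq \|\psi_0\|_{H^s} + T\sup_{\sigma \in [0,T]}\|N(\psi)(\sigma)\|_{H^s}.
\end{equation*}

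The heart of the argument is then the trilinear estimate $\|N(\psi)\|_{H^s} \lesssim \|\psi\|_{H^s}^3$, together with its Lipschitz counterpart $\|N(\psi)-N(\phi)\|_{H^s} \lesssim (\|\psi\|_{H^s}^2 + \|\phi\|_{H^s}^2)\|\psi-\phi\|_{H^s}$, and I would establish it in two sub-steps. First, I would control the potential $V_\psi = (b-\Delta)^{-1}|\psi|^2$ in $L^\infty(\R^2)$: in dimension two, Sobolev embedding places $\psi \in L^{2/(1-s)}$ for $0 < s < 1$ (and in every $L^p$ with $p < \infty$ when $s \geq 1$), hence $|\psi|^2 \in L^{1/(1-s)}$; the two-derivative gain of $(b-\Delta)^{-1}$ then yields $V_\psi \in W^{2,1/(1-s)}(\R^2)$, which embeds into $L^\infty$ because $W^{2,q}(\R^2) \hookrightarrow L^\infty$ for every $q > 1$, giving $\|V_\psi\|_{L^\infty} \lesssim \|\psi\|_{H^s}^2$ together with a H\"older bound $\|V_\psi\|_{C^{\alpha}} \lesssim \|\psi\|_{H^s}^2$ for some $\alpha > 0$. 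Second, I would close the product $V_\psi\gamma^0\psi$ in $H^s$ by a fractional Leibniz / Kato-Ponce inequality that splits the derivatives between the two factors, controlling $\|V_\psi\gamma^0\psi\|_{H^s}$ by $\|V_\psi\|_{L^\infty}\|\psi\|_{H^s}$ plus a remainder of the form $\|V_\psi\|_{W^{s,p}}\|\psi\|_{L^q}$ with $1/p+1/q = 1/2$ chosen so that the last factor is handled by Sobolev and the first by the smoothing of $(b-\Delta)^{-1}$.

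Once these nonlinear bounds are in hand, a choice of $T = T(\|\psi_0\|_{H^s})$ satisfying $T C R^2 < \tfrac{1}{2}$ makes $\Phi$ both a self-map of $X_{T,R}$ and a strict contraction, and Banach's theorem produces the unique fixed point $\psi \in C([0,T];H^s)$. Strong continuity of $t \mapsto \psi(t)$ in $H^s$ follows from the strong continuity of $e^{-\I t\mathcal{D}_m}$ on $H^s$ combined with dominated convergence applied to the Duhamel integral, and uniqueness within the full class $C([0,T];H^s)$ is a direct consequence of the Lipschitz estimate and a Gronwall argument.

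The main obstacle is the genuinely two-dimensional low-regularity range $0 < s < 1$, where $H^s$ fails to embed into $L^\infty$ and one cannot rely on any naive algebra property of $H^s$ to handle the product $V_\psi\psi$. The argument is rescued by the fact that in $\R^2$ the operator $(b-\Delta)^{-1}$ gains exactly the two derivatives needed to promote $V_\psi$ to $L^\infty$ through Sobolev embedding, at the cost of only a polynomial power of $\|\psi\|_{H^s}$. For local-in-time existence this crude polynomial control suffices; the sharper logarithmic refinement provided by Theorem~\ref{T1} becomes essential only later, when the solution has to be globalised via a log-Gronwall iteration.
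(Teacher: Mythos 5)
Your proposal is correct and reaches the same contraction-mapping framework as the paper, but it replaces the paper's key lemma with a more elementary one, so the two proofs genuinely differ at one point: the bound on the potential $V_\psi=(b-\Delta)^{-1}|\psi|^2$. The paper runs the fixed point on the same ball and uses exactly your fractional Leibniz splitting with exactly your exponents, namely $\|(1-\Delta)^{s/2}(V_\psi\gamma^0\psi)\|_{L^2}\lesssim \|(1-\Delta)^{s/2}V_\psi\|_{L^{2/s}}\|\psi\|_{L^{2/(1-s)}}+\|V_\psi\|_{L^\infty}\|\psi\|_{H^s}$; it then estimates the two potential norms via the Brezis--Gallou\"et-type inequality of Theorem~\ref{T1} combined with the Besov bilinear bound $\||\psi|^2\|_{B^s_{1,\infty}}\lesssim\|\psi\|_{H^s}^2$, obtaining the log-structured cubic estimate $\|N(\psi)\|_{H^s}\lesssim \|\psi\|_{L^2}^2\log\bigl(2+\|\psi\|_{H^s}^2/\|\psi\|_{L^2}^2\bigr)\|\psi\|_{H^s}$. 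You instead bound the same two norms polynomially, via $H^s\hookrightarrow L^{2/(1-s)}$ and the elliptic gain $(b-\Delta)^{-1}:L^{1/(1-s)}\to W^{2,1/(1-s)}\hookrightarrow L^\infty$ (legitimate since $1/(1-s)>1$ strictly, so Calder\'on--Zygmund theory applies), yielding $\|N(\psi)\|_{H^s}\lesssim\|\psi\|_{H^s}^3$. For the local statement this is entirely sufficient and avoids the Littlewood--Paley machinery, and you correctly diagnose the trade-off: the point of the paper's logarithmic refinement is the prefactor $\|\psi\|_{L^2}^2$, which is conserved, and which later turns the Gronwall inequality into the double-exponential global bound; with your purely cubic bound the existence time shrinks with $\|\psi_0\|_{H^s}$ and globalization would not follow, but that lies outside the present theorem. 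One detail to fix when writing up the remainder term $\|(1-\Delta)^{s/2}V_\psi\|_{L^{2/s}}$: the smoothing step $(b-\Delta)^{-(2-s)/2}:L^{1/(1-s)}\to L^{2/s}$ requires $1/(1-s)\leq 2/s$, i.e. $s\leq 2/3$; for $2/3<s<1$ you should instead use $|\psi|^2\in L^{2/s}$, which follows from $H^s\hookrightarrow L^{4/s}$ (valid precisely for $s\geq 2/3$), after which the operator acts boundedly on $L^{2/s}$ itself. With that interpolation the scheme closes for all $0<s<1$, and the range $s\geq 1$ is easier, as you note.
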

 \begin{proof}
Let $R:= \| \psi_0\|_{H^s}$ and take $b = 1$. We use a contraction principle in the space $X_T = \{ \psi \in  L^\infty([0,T];H^s(\R^2));  \| \psi\|_{L^\infty((0,T),H^s(\R^2))}   \leq 2R \}$ equipped with the distance
\begin{equation*}
    d(u,v) = \| u - v\|_{L^\infty(0,T), H^s(\R^2)},
\end{equation*}{}
with $T = T(R)$ to be chosen later. Observe that $(X_T, d)$ is a complete metric space. We define the map
\begin{equation}\label{the contraction}
S( \psi) = e^{-it D_m}\psi_0 + i \int_0^t e^{-i (t-\tau) D_m}  ((1 - \Delta)^{-1} |\psi (\tau)|^2)\gamma_0 \psi(\tau) d\tau,
\end{equation}
and we are going to prove that $S$ is a contraction in the space $X_T$. Indeed, taking an element $\psi \in X_T$, one can see that
\begin{equation} \label{eq,b}
\begin{split}
\| S(\psi) (t) \|_{H^s(\R^2)} & \leq \| \psi_0 \|_{H^s(\R^2)}   + \int_0^t \| (1-\Delta)^{s/2} \left[ ((1 - \Delta)^{-1} |\psi(\tau)|^2) \gamma_0 \psi(\tau) \right] \|_{L^2(\R^2)} d\tau \\ &
\leq  \| \psi_0 \|_{H^s(\R^2)} +  C \int_0^t \left\| (1-\Delta)^{-1+s/2}   |\psi(\tau)|^2)  \right\|_{L^\frac{2}{s}} \left\| \psi(\tau) \right\|_{L^\frac{2}{1-s}} \\& + \left\|  ((1 - \Delta)^{-1} |\psi(\tau)|^2)  \right\|_{L^\infty} \| (1-\Delta)^{s/2}\psi(\tau) \|_{L^2} d\tau \\&
\leq \| \psi_0\|_{H^s} + C \int_0^t \| \psi(\tau) \|_{L^2}^2 \log \left(2 + \frac{ \| \psi(\tau)\|^2_{H^s}}{\| \psi(\tau) \|^2_{L^2}}  \right)  \| \psi(\tau)\|_{H^s} d\tau,
\end{split}
\end{equation}
where we used \eqref{eq:result1}, the Sobolev's embedding $\| \psi \|_{L^{\frac{2}{1-s}}(\R^2)} \leq C \| \psi\|_{H^s(\R^2)}$ and that $\| |\psi|^2 \|_{B^s_{1,\infty}} \leq C \| \psi \|_{B^s_{2,2}}^2$.
Indeed one has that, for any $\phi, \psi \in H^s$,
 \begin{equation} \label{Bs norm}
 \begin{split}
 \sup_{j \geq 0} 2^{js} \| (\phi \psi)_{(j)} \|_{L^1} & \lesssim \sup_{j \geq 0}2^{js}  \sum_{m \leq j} \left\|  \phi_{(m)} \psi_{(j-m)} \right\|_{L^1} \leq \sup_{j \geq 0} \sum_{m \leq j} 2^{ms} \|  \phi_{(m)} \|_{L^2}  2^{(j-m)s}\|\psi_{(j-m)}\|_{L^2} \\ &
 \leq \| \phi\|_{B^s_{2,2}} \|\psi\|_{B^s_{2,2}}
 \end{split}
\end{equation}
 It follows that
\begin{equation} \label{eq stima 2}
\begin{split}
\| S(\psi) \|_{L^\infty(0,T);H^s}  &\leq R +
C T  \| \psi(\tau) \|_{L^\infty(0,T),L^2}^2 \| \psi \|_{L^\infty(0,T),H^s} \log \left(2 + \sup_{\tau \in (0,T)} \left(\frac{ \| \psi (\tau)\|^2_{H^s}}{\| \psi(\tau) \|^2_{L^2}} \right)  \right) \\ &
\leq R + C_{R} T,
\end{split}
\end{equation}
where $C_R = C \| \psi(\tau) \|_{L^\infty(0,T),L^2}^2 \| \psi \|_{L^\infty(0,T),H^s} \log \left(2 + \sup_{\tau \in (0,T)} \left(\frac{ \| \psi (\tau)\|^2_{H^s}}{\| \psi(\tau) \|^2_{L^2}} \right)  \right)$ which is finite. Indeed $\psi \in X_T$ implies that $\| \psi(\tau) \|_{L^\infty(0,T),L^2} \in [0,2R]$, and $\lim_{M \rightarrow 0} M \log (2 + \frac{1}{M}) = 0$. A posteriori, once the local existence is established and the conservation of the $L^2$ norm is proved, we have that $C_R \leq C \| \psi_0\|_{L^2}^2 R \log(2 + \frac{4R^2}{\| \psi_0\|_{L^2}^2})$. In any case, we choose $T \leq \frac{R}{C_R}$ so that $S:X_T \rightarrow X_T$.

Now we show that \eqref{the contraction} is a contraction in $X_T$. Note that for any $\psi, \phi \in X_T$, one has
\begin{equation*}
\begin{split}
 \| \psi - \phi & \|_{L^\infty(0,T),H^s}   = \\ & \left\| \int_0^t e^{-i (t-\tau) D_m}  ((1 - \Delta)^{-1} |\psi (\tau)|^2)\gamma_0 \psi(\tau) - ((1 - \Delta)^{-1} |\phi (\tau)|^2)\gamma_0 \phi(\tau)  d\tau \right\|_{L^\infty(0,T),H^s} \\ &
 \leq C T \left( \| ((1 - \Delta)^{-1} ( |\psi|^2 + |\phi|^2))(\psi - \phi) \|_{L^\infty(0,T),H^s}  \right) + \\ & +  C T \left( \| ((1 - \Delta)^{-1} ( |\psi|^2 - |\phi|^2))(\psi + \phi) \|_{L^\infty(0,T),H^s} \right).
        \end{split}
\end{equation*}
From the estimate \eqref{eq:result1} it follows that
\begin{equation*}
\begin{split}
\| ((1 - & \Delta)^{-1} ( |\psi|^2 + |\phi|^2))(\psi - \phi) \|_{L^\infty(0,T),H^s} \\ &
\leq C \| \psi - \phi \|_{L^\infty(0,T),H^s} \sup_{\tau \in (0,T)} \left( \| |\psi|^2 + |\phi|^2 \|_{L^1}  \log \left (2 +  \frac{\| |\psi|^2 + |\phi|^2 \|_{B^s_{1,\infty}}}{ \| |\psi|^2 + |\phi|^2 \|_{L^1}} \right)\right) \\ &
\leq C_R \| \psi - \phi \|_{L^\infty(0,T),H^s}
    \end{split}{}
\end{equation*}
and
\begin{equation*}
\begin{split}
\|  ((1 - \Delta)^{-1}( & |\psi|^2 -  |\phi|^2)) (\psi + \phi) \|_{L^\infty(0,T),H^s} \\ &
\leq C \| \psi + \phi \|_{L^\infty(0,T), H^s} \sup_{t \in [0,T)} \left( \| |\psi|^2 - |\phi|^2 \|_{L^1} \log \left( 2 + \frac{\| |\psi|^2 - |\phi|^2 \|_{B^s_{1,\infty}}}{\| \psi|^2 - |\phi|^2 \|_{L^1}}   \right) \right) \\ &
\leq C R \| \psi - \phi \|_{L^\infty(0,T),H^s}  \sup_{t \in [0,T)} \left( \frac{ \| |\psi|^2 - |\phi|^2 \|_{L^1}}{\| \psi - \phi\|_{H^s} } \log \left( 2 + \frac{\| |\psi|^2 - |\phi|^2 \|_{B^s_{1,\infty}}}{\| \psi|^2 - |\phi|^2 \|_{L^1}}   \right) \right) \\ &
\leq CR \| \psi - \phi \|_{L^\infty(0,T),H^s} \sup_{t \in [0,T)} \left(  \frac{ \| |\psi|^2 - |\phi|^2 \|_{L^1}}{\| \psi - \phi\|_{H^s} } \log \left( 2 + 4R\frac{ \| \psi - \phi \|_{H^s}}{\| \psi|^2 - |\phi|^2 \|_{L^1}}\right) \right) \\ &
 \leq C_R \| \psi - \phi \|_{L^\infty(0,T),H^s},
    \end{split}
\end{equation*}
where we used \eqref{Bs norm} to have
\begin{equation*}
\| |\psi|^2 - |\phi|^2 \|_{B^s_{1,\infty}} \leq \| (\bar{\psi} - \bar{\phi}) \psi + \bar{\phi}(\psi - \phi)\|_{B^s_{1,\infty}} \leq 4R \|\psi - \phi\|_{H^s}
\end{equation*}
 and that $ \frac{ \| |\psi|^2 - |\phi|^2 \|_{L^1}}{\| \psi - \phi\|_{H^s} } \leq C_R$ since $\psi \neq \phi$. We conclude that \eqref{the contraction} is a contraction in $X_T$.
The conservation of the $L^2$ norm is shown in the appendix (see Lemma \ref{l:m1})
\end{proof}
\begin{theorem} [Global Existence] For any $s > 0$ and any $\psi_0 \in H^s(\R^2)$, there exists a unique solution $\psi \in C([0,+\infty);H^s(\R^2))$ to the equation \eqref{eq:base2}.
\end{theorem}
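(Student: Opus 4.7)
The strategy is a standard continuation argument based on Theorem \ref{local existence}: since the local existence time there depends only on $\|\psi_0\|_{H^s}$, it suffices to show that the $H^s$ norm of the solution cannot blow up in finite time. I would denote by $T^* \in (0,\infty]$ the maximal forward time of existence of a solution $\psi \in C([0,T^*);H^s)$ and argue by contradiction that $T^* < \infty$ leads to a continuation past $T^*$.

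The first step is to recycle the nonlinear estimate already derived in the proof of Theorem \ref{local existence}: applying \eqref{eq:result1} together with the product bound \eqref{Bs norm} and the Sobolev embedding $H^s(\R^2) \hookrightarrow L^{2/(1-s)}(\R^2)$ directly to the Duhamel formula \eqref{eq:base2}, one obtains, for every $t \in [0,T^*)$,
\begin{equation*}
\|\psi(t)\|_{H^s} \leq \|\psi_0\|_{H^s} + C\int_0^t \|\psi(\tau)\|_{L^2}^2 \log\left(2 + \frac{\|\psi(\tau)\|_{H^s}^2}{\|\psi(\tau)\|_{L^2}^2}\right)\|\psi(\tau)\|_{H^s}\,d\tau.
\end{equation*}
Using the $L^2$ conservation of Lemma \ref{l:m1} and setting $M := \|\psi_0\|_{L^2}^2$, this reduces to
\begin{equation*}
\|\psi(t)\|_{H^s} \leq \|\psi_0\|_{H^s} + C M \int_0^t \log\left(2 + \frac{\|\psi(\tau)\|_{H^s}^2}{M}\right)\|\psi(\tau)\|_{H^s}\,d\tau.
\end{equation*}

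The second step is a logarithmic Gronwall-type argument. Since $y\mapsto y\log(2 + y^2/M)$ is non-decreasing on $[0,\infty)$, a standard comparison principle gives $\|\psi(t)\|_{H^s}\leq y(t)$, where $y$ solves the ODE $y'=CMy\log(2+y^2/M)$ with $y(0)=\|\psi_0\|_{H^s}$. Setting $u(t) := \log(2+y(t)^2/M)$ and using the elementary bound $y^2/(2+y^2/M)\leq M$, I would derive $u'(t)\leq 2CM u(t)$, which integrates to a double-exponential upper bound on $y$; in particular $\|\psi(t)\|_{H^s}$ stays bounded on every bounded subinterval of $[0,T^*)$.

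Finally, to close the continuation: suppose $T^*<\infty$; the previous step provides a constant $K>0$ with $\|\psi(t)\|_{H^s}\leq K$ for all $t\in[0,T^*)$. Letting $T_0 := T(K)>0$ denote the local-existence time from Theorem \ref{local existence} for data of $H^s$ norm at most $K$, I would pick $t_0\in (T^*-T_0,T^*)$ and solve locally with datum $\psi(t_0)$; by uniqueness this extends $\psi$ past $T^*$, contradicting maximality, whence $T^*=+\infty$. Global uniqueness then follows from local uniqueness by a connectedness argument, and the $L^2$ conservation is just Lemma \ref{l:m1}. The main obstacle is the logarithmic Gronwall step: the right-hand side grows faster than linearly in $\|\psi\|_{H^s}$, so classical Gronwall fails, and what saves the argument is the precise logarithmic form of \eqref{eq:result1}, which keeps the ODE comparison double-exponential rather than blowing up in finite time.
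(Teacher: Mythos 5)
Your proposal is correct and follows essentially the same route as the paper: the logarithmic nonlinear estimate from \eqref{eq:result1} combined with $L^2$ conservation yields a log-Gronwall inequality whose integration gives the double-exponential a priori bound \eqref{superexpo growth}, which together with the local theory (existence time depending only on $\|\psi_0\|_{H^s}$) gives global existence. If anything, your version is slightly more careful than the paper's, which differentiates $\|\psi(t)\|_{H^s}$ formally, whereas your integral-inequality plus ODE-comparison formulation, and your explicit continuation argument, make precise steps the paper leaves implicit.
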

\begin{proof}
Fix a $s>0$ let $\psi_0 \in H^s$. From the integral form \eqref{eq:base2} and \eqref{eq,b}, one can easily derive that for any $t \in [0,T)$, where $T>0$ is given by \ref{local existence},
\begin{equation*}
\frac{d}{dt} \| \psi (t) \|_{H^s} \leq C \log \left(2 + \|\psi(t)\|_{H^s} \right) \| \psi(t) \|_{H^s}.
\end{equation*}
Since one has
\begin{equation*}
\frac{d}{dt}  \| \psi (t) \|_{H^s} \leq \begin{cases}
& C \log(4) \| \psi(t)\|_{H^s} \mbox{ if } \| \psi(t)\|_{H^s} \leq 2 \\ &
C \log (2\| \psi(t)\|_{H^s} ) \| \psi(t)\|_{H^s} \leq 2C \log(\| \psi\|_{H^s}) \| \psi (t) \|_{H^s} \mbox{ if } \| \psi(t)\|_{H^s} \geq 2
\end{cases}
\end{equation*}
it follows that
\begin{equation} \label{superexpo growth}
\| \psi (t) \|_{H^s} \leq \begin{cases}
& C_1 e^{C_2}t \mbox{ if } \| \psi(t)\|_{H^s} \leq 2 \\ &
e^{C_1 e^{C_2 t}} \mbox{ if } \| \psi(t)\|_{H^s} \geq 2
\end{cases}
\end{equation}
where $C_1$ depends on $\| \psi_0\|_{H^s}$ and $C_2$ depends on $C$.
\end{proof}
\section{Proof of the Theorem \ref{T1}}
In this section we prove
\begin{MTheo}  For any $b > 0$ and $s > 0$ there exists a constant $C=C(b,s)>0$ such that, for any $\psi \in B^s_{1,\infty}(\R^2) $ the following inequality is true
  \begin{equation}
  \begin{split}
\| ( b - \Delta)^{-1} \psi \|_{L^\infty}   + \| ( b - \Delta) ^{-1} (1-\Delta)^{s/2}\psi   \|_{L^\frac{2}{s}} \leq C \|\psi\|_{L^1(\R^2)} \ln \left( 2 + \frac{\|\psi\|_{B^s_{1,\infty}}}{\|\psi\|_{L^1}}\right) .
\end{split}
\end{equation}
\end{MTheo}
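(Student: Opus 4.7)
The plan is to use a Littlewood–Paley dyadic decomposition $\psi = \sum_{j \ge 0} \psi_{(j)}$ together with a high/low frequency splitting at a level $N$ which will eventually be chosen to optimize the bound. The reason a logarithm appears is that the operator $(b-\Delta)^{-1}$ is, in two dimensions, only critically outside the $L^1\to L^\infty$ endpoint: it gains exactly two derivatives, and the Sobolev embedding $W^{2,1}\hookrightarrow L^\infty$ fails in $\R^2$ by the usual Trudinger-type gap. Each dyadic block will give a uniform contribution, and the logarithmic loss will come precisely from summing these uniform pieces up to the cut-off $N$.

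The first step I would carry out is the per-block estimate. For $j\ge 1$ the symbol $(b+|\xi|^2)^{-1}$ is of size $\lesssim 2^{-2j}$ on the support of $\hat\rho(2^{-j}\,\cdot\,)$, so combining Bernstein's inequality $\|\psi_{(j)}\|_{L^\infty}\lesssim 2^{2j}\|\psi_{(j)}\|_{L^1}$ with the multiplier bound gives
\begin{equation*}
\|(b-\Delta)^{-1}\psi_{(j)}\|_{L^\infty}\lesssim 2^{-2j}\|\psi_{(j)}\|_{L^\infty}\lesssim \|\psi_{(j)}\|_{L^1},
\end{equation*}
and the same game with the Bernstein exponent $\|\psi_{(j)}\|_{L^{2/s}}\lesssim 2^{j(2-s)}\|\psi_{(j)}\|_{L^1}$ combined with $|(b+|\xi|^2)^{-1}(1+|\xi|^2)^{s/2}|\lesssim 2^{j(s-2)}$ yields
\begin{equation*}
\|(b-\Delta)^{-1}(1-\Delta)^{s/2}\psi_{(j)}\|_{L^{2/s}}\lesssim \|\psi_{(j)}\|_{L^1}.
\end{equation*}
The block $j=0$ is handled directly since $\hat\rho_{(0)}$ has compact support and $b>0$, giving a constant $C(b,s)$ times $\|\psi\|_{L^1}$.

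The second step is the splitting. Given any integer $N\ge 0$, I use the $L^1$-based estimate above for the low block and the Besov decay $\|\psi_{(j)}\|_{L^1}\le 2^{-js}\|\psi\|_{B^s_{1,\infty}}$ for the high block, obtaining for both quantities of interest
\begin{equation*}
\text{LHS}\;\lesssim\; \sum_{j=0}^N \|\psi_{(j)}\|_{L^1} \;+\; \sum_{j>N} 2^{-js}\|\psi\|_{B^s_{1,\infty}} \;\lesssim\; (N+1)\|\psi\|_{L^1} + \frac{2^{-Ns}}{1-2^{-s}}\|\psi\|_{B^s_{1,\infty}} .
\end{equation*}
Choosing $N$ so that $2^{-Ns}\|\psi\|_{B^s_{1,\infty}}\sim \|\psi\|_{L^1}$, namely $N\sim \frac{1}{s}\log_2\!\bigl(2+\|\psi\|_{B^s_{1,\infty}}/\|\psi\|_{L^1}\bigr)$, balances the two contributions and yields exactly the desired right-hand side.

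The main technical nuisance I expect is not the dyadic sum but the need to keep constants clean across both norms in a single cut-off $N$: the symbol $(b+|\xi|^2)^{-1}(1+|\xi|^2)^{s/2}$ is not scale-invariant, so some care is required to absorb the $b$-dependence (and the $1/s$ and $1/(1-2^{-s})$ factors for small $s$) into the constant $C(b,s)$. One also has to check that when $\|\psi\|_{B^s_{1,\infty}}\le \|\psi\|_{L^1}$ the choice $N=0$ is used and the bound trivializes, so the logarithmic factor is genuinely $\log(2+\cdot)$ rather than just $\log$. Once these book-keeping points are settled, the two estimates follow from the same argument and can be combined to produce \eqref{eq:result1}.
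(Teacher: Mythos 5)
Your proposal is correct and follows essentially the same route as the paper: a Littlewood--Paley decomposition with the per-block bound $\|(b-\Delta)^{-1}\psi_{(j)}\|_{L^\infty}+\|(b-\Delta)^{-1}(1-\Delta)^{s/2}\psi_{(j)}\|_{L^{2/s}}\lesssim \|\psi_{(j)}\|_{L^1}$ obtained from Bernstein plus the dyadic symbol decay, a separate treatment of the low-frequency block, and a high/low splitting at a cutoff chosen logarithmically in $\|\psi\|_{B^s_{1,\infty}}/\|\psi\|_{L^1}$ (the paper's $M$ with $sM=\ln(2+\|\psi\|_{B^s_{1,\infty}}/\|\psi\|_{L^1})$ is exactly your balancing choice of $N$). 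Your book-keeping remarks about the $C(b,s)$ dependence and the degenerate case $\|\psi\|_{B^s_{1,\infty}}\le\|\psi\|_{L^1}$ are sound and, if anything, slightly more careful than the paper, which fixes $b=1$ in the second estimate and leaves the final balancing implicit.
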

Recall the definition of $ \psi_{(j)}$ given in the introduction.
We will use the following lemmas (see e.g \cite{BL})
\begin{lem}
For any $j \in \Z$ and any $q,r$ such that $1 \leq q < r \leq \infty$, there exists a constant $C> 0$ such that
\begin{equation}\label{eq.B1}
    \| \psi_{(j)} \|_{L^r} \lesssim 2^{2j(\frac{1}{q} - \frac{1}{r})} \|\psi_{(j)}\|_{L^q}.
\end{equation}
\end{lem}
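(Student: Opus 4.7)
The plan is to exploit the Fourier localization of $\psi_{(j)}$ via a reproducing kernel and then conclude with Young's convolution inequality. Since $\hat{\rho}$ is compactly supported, I can pick a Schwartz function $\tilde\rho \in \mathcal{S}(\R^2)$ whose Fourier transform $\hat{\tilde\rho}$ is smooth, compactly supported, and identically one on a neighborhood of $\mathrm{supp}(\hat\rho)$; one simply defines $\hat{\tilde\rho}$ as such a bump and sets $\tilde\rho = \F^{-1}\hat{\tilde\rho}$, which is automatically Schwartz. For the low-frequency block $j=0$ I would pick analogously a $\tilde\rho^{(0)}$ with $\hat{\tilde\rho^{(0)}} \equiv 1$ on $\mathrm{supp}(\hat\rho_{(0)})$.

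Since $\hat{\tilde\rho}(2^{-j}\xi) \equiv 1$ on the support of $\hat\rho(2^{-j}\cdot)$, the definition $\widehat{\psi_{(j)}}(\xi) = \hat\rho(2^{-j}\xi)\hat\psi(\xi)$ gives the identity $\widehat{\psi_{(j)}}(\xi) = \hat{\tilde\rho}(2^{-j}\xi)\widehat{\psi_{(j)}}(\xi)$. Inverse-Fourier-transforming and using the standard scaling $\F^{-1}\bigl(\hat{\tilde\rho}(2^{-j}\cdot)\bigr)(x) = 2^{2j}\tilde\rho(2^j x)$, I obtain the reproducing formula
\begin{equation*}
\psi_{(j)} = K_j * \psi_{(j)}, \qquad K_j(x) := 2^{2j}\tilde\rho(2^j x).
\end{equation*}

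Next, define $p \in [1,\infty]$ by $\tfrac{1}{p} = 1 - \tfrac{1}{q} + \tfrac{1}{r}$; the hypothesis $1 \leq q < r \leq \infty$ guarantees $p \in [1,\infty]$. Young's convolution inequality then yields
\begin{equation*}
\|\psi_{(j)}\|_{L^r} \leq \|K_j\|_{L^p}\,\|\psi_{(j)}\|_{L^q}.
\end{equation*}
A change of variables computes $\|K_j\|_{L^p} = 2^{2j(1-1/p)}\|\tilde\rho\|_{L^p} = 2^{2j(1/q-1/r)}\|\tilde\rho\|_{L^p}$, and since $\tilde\rho$ is Schwartz, $\|\tilde\rho\|_{L^p}$ is finite, uniformly bounded in $p \in [1,\infty]$. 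This gives precisely the claimed Bernstein bound \eqref{eq.B1}. The low-frequency case $j=0$ is identical upon substituting $\tilde\rho^{(0)}$ for $\tilde\rho$.

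There is no real obstacle in this argument: the lemma is a routine Bernstein inequality and the entire proof amounts to a dilation identity together with one application of Young's inequality. The only subtlety is to ensure at the outset that the frequency localizer $\tilde\rho$ can be chosen Schwartz with the indicated reproducing property, which follows immediately from standard constructions of compactly supported smooth bumps in $\R^2$.
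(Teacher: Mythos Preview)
Your argument is correct: this is the standard proof of Bernstein's inequality, and each step (existence of the reproducing multiplier $\tilde\rho$, the convolution identity $\psi_{(j)} = K_j * \psi_{(j)}$, Young's inequality with exponent $1/p = 1 - 1/q + 1/r$, and the scaling computation $\|K_j\|_{L^p} = 2^{2j(1/q - 1/r)}\|\tilde\rho\|_{L^p}$) is valid. The paper does not actually supply a proof of this lemma; it simply quotes it as a known fact with a reference to \cite{BL}, so there is no ``paper's own proof'' to compare against --- your write-up is exactly the argument one finds in the cited source.
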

\begin{lem}
Assume $\psi$ to be a Schwartz function and assume $\psi_{(j)} \in L^p$ for some $p \in [1,\infty]$. Then, for any $b > 0$, $s \in \R$ and $j \geq 1$, there exists a constant $C>0$ such that
\begin{equation*}
\| (b - \Delta)^{\frac{s}{2}} \psi_{(j)} \|_{L^p} \leq C 2^{sj} \| \psi_{(j)} \|_{L^p}
\end{equation*}
\end{lem}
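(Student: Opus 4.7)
My plan is to exploit the fact that $\psi_{(j)}$ has Fourier support in an annulus of scale $2^j$, on which the symbol $(b+|\xi|^2)^{s/2}$ is essentially of size $2^{sj}$. I will realize the operator on this block as a convolution with a kernel whose $L^1$ norm scales like $2^{sj}$, and then invoke Young's inequality.

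Concretely, I would first pick an enlarged radial cutoff $\tilde\rho$ equal to $1$ on the support of $\hat\rho$ and supported in a fixed annulus bounded away from $0$ (e.g.\ $\{\frac{1}{2}\le|\eta|\le 4\}$). Since $\widehat{\psi_{(j)}}(\xi)=\hat\rho(2^{-j}\xi)\hat\psi(\xi)$, one has $\widehat{\psi_{(j)}}=\tilde\rho(2^{-j}\xi)\widehat{\psi_{(j)}}$, hence
\begin{equation*}
(b-\Delta)^{s/2}\psi_{(j)} \;=\; K_j * \psi_{(j)}, \qquad K_j := \F^{-1}\bigl[(b+|\xi|^2)^{s/2}\,\tilde\rho(2^{-j}\xi)\bigr].
\end{equation*}
By Young's inequality the desired estimate reduces to $\|K_j\|_{L^1}\lesssim 2^{sj}$, uniformly in $j\ge 1$.

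Next I would extract the scale. Changing variable $\xi=2^j\eta$ and factoring $(b+2^{2j}|\eta|^2)^{s/2}=2^{sj}(b\,2^{-2j}+|\eta|^2)^{s/2}$ yields
\begin{equation*}
K_j(x) \;=\; 2^{(s+2)j}\,(\F^{-1}n_j)(2^j x), \qquad n_j(\eta):=(b\,2^{-2j}+|\eta|^2)^{s/2}\,\tilde\rho(\eta),
\end{equation*}
so by a further change of variable $\|K_j\|_{L^1}=2^{sj}\|\F^{-1}n_j\|_{L^1}$. It remains to prove that $\|\F^{-1}n_j\|_{L^1}$ is bounded independently of $j\ge 1$.

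This uniform bound is the only real step, and it is where the hypothesis $j\ge 1$ is used. Because $b\,2^{-2j}\in(0,b/4]$ stays in a compact set and $\tilde\rho$ is supported where $|\eta|$ is bounded away from $0$ and $\infty$, the map $\eta\mapsto(b\,2^{-2j}+|\eta|^2)^{s/2}$ is $C^\infty$ on $\mathrm{supp}\,\tilde\rho$ with every derivative bounded uniformly in $j$ (even for negative $s$, since we stay away from the singularity at the origin). Thus $\{n_j\}_{j\ge 1}$ is a bounded family of compactly supported smooth functions, uniformly in every $C^N$ norm; a standard integration-by-parts argument then gives $|(\F^{-1}n_j)(y)|\le C_N(1+|y|)^{-N}$ uniformly in $j$, and choosing $N=3$ yields $\|\F^{-1}n_j\|_{L^1}\le C$. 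Combining everything gives $\|K_j\|_{L^1}\le C\,2^{sj}$ and the lemma follows.
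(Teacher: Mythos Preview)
Your argument is correct: the rescaling $\xi=2^j\eta$ together with Young's inequality reduces the estimate to the uniform $L^1$ bound on $\F^{-1}n_j$, and your use of the compact-support/uniform-$C^N$ bounds on $n_j$ (valid precisely because $|\eta|$ stays away from $0$ on $\mathrm{supp}\,\tilde\rho$) to get decay via integration by parts is the standard way to close this.

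Note, however, that the paper does \emph{not} actually prove this lemma; it states it as a known fact with the citation ``(see e.g.\ \cite{BL})'' to Bergh--L\"ofstr\"om, and the \texttt{proof} environment that follows is the proof of the Brezis--Gallou\"et inequality (Theorem~\ref{T1}), not of the lemma. So there is no original proof to compare to; your argument is essentially the textbook Fourier-multiplier proof that underlies such references.
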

\begin{proof}
Let $\psi \in B^s_{1,\infty}(\R^2)$ and observe that $\psi \in L^1$. Let $M \in \R$ a constant such that  $s M = \ln \left(2 + \frac{\|\psi\|_{B^s_{1,\infty}(\R^2)}}{\|\psi\|_{L^1(\R^2)}}\right), $ and let $[M]$ be the integer part of $M$. Observe that $\left( (b - \Delta)^{-1} \psi \right)_{(j)} = (b - \Delta)^{-1} \psi_{(j)}$ and so
\begin{equation*}
\begin{split}
\| ( b - \Delta)^{-1} \psi & \|_{L^\infty} =  \left\| ( b - \Delta)^{-1} \left( \sum_{j \in \Z} \psi_{(j)}  \right) \right\|_{L^\infty} \lesssim \| ( b - \Delta)^{-1} \psi_{(0)}\|_{L^\infty} + \sum_{j > 0} \left\| ( b - \Delta)^{-1} \psi_{(j)} \right\|_{L^\infty}
\end{split}
\end{equation*}
From the two lemmas, one can obtain that
\begin{equation*}
\begin{split}
\sum_{j > 0} \| ( b - \Delta) ^{-1} & \psi_{(j)}\|_{L^\infty} \lesssim \sum_{j >0}  2^{-2j} \|\psi_{(j)}\|_{L^\infty}
 \lesssim \sum_{j > 0}  \|\psi_{(j)}\|_{L^1} = \sum_{0 \leq j \leq [M]}  \|\psi_{(j)}\|_{L^1} + \sum_{ j > [M] + 1}  \|\psi_{(j)}\|_{L^1} \\ &
 \lesssim [M] \| \psi \|_{L^1} + \sum_{ j \geq [M] + 1}  2^{-js}2^{js}\|\psi_{(j)}\|_{L^1}  \lesssim [M] \| \psi \|_{L^1}  + 2^{-s([M] + 1)}\| \psi \|_{B^s_{1,\infty}},
\end{split}
\end{equation*}
Moreover, since $\| \F^{-1}(\hat{\rho_{(0)}}) \|_{L^\infty} \leq C$, one has
\begin{equation*}
\| ( b - \Delta)^{-1} \psi_{(0)}\|_{L^\infty} \leq C \| \F^{-1}(\hat{\rho}_{(0)} )\|_{L^\infty} \| ( b - \Delta)^{-1} \psi \|_{L^1} \leq C \| \psi\|_{L^1}.
\end{equation*}
Moreover, if $b=1$, then
\begin{equation*}
\begin{split}
\| ( 1 - \Delta)^{-1 +\frac{s}{2}} \psi  \|_{L^\frac{s}{2}} \lesssim \| ( 1 - \Delta)^{-1 + \frac{s}{2}} \psi_{(0)}\|_{L^\frac{s}{2}} + \sum_{j > 0} \left\| ( 1 - \Delta)^{-1 + \frac{s}{2}} \psi_{(j)} \right\|_{L^\frac{s}{2}}
\end{split}
\end{equation*}
and using the lemmas before, one has
\begin{equation*}
\begin{split}
  \sum_{j > 0} \|  &(1-\Delta)^{-1 + s/2} \psi_{(j)}\|_{L^\frac{2}{s}}
 \lesssim \sum_{0 \leq j \leq [M]} 2^{(-2 + s)j}\| \psi_{(j)}\|_{L^\frac{2}{s}} +  \sum_{j > [M]}2^{-js} 2^{js} 2^{-2j+sj} \| \psi_{(j)}\|_{L^\frac{2}{s}} \\
& \lesssim \sum_{0 \leq j \leq [M]} 2^{(-2+s)j} 2^{(2 - s)j}\| \psi_{(j)}\|_{L^1} +  \sum_{j \geq [M]+1} 2^{-js}2^{js} \| \psi_{(j)}\|_{L^1}\\
& \lesssim [M] \| \psi\|_{L^1} + 2^{-s([M]+1)} \| \psi\|_{B^s_{1,\infty}},
\end{split}
\end{equation*}
and, as before
\begin{equation*}
\| ( 1 - \Delta)^{-1 + \frac{s}{2}} \psi_{(0)}\|_{L^\frac{s}{2}} \leq C \| \F^{-1}(\hat{\rho_{(0)}})\|_{L^\frac{2}{s}} \| \psi\|_{L^1} \leq C  \| \psi\|_{L^1}.
\end{equation*}
The result follows easily from the choice of $M$.
\end{proof}

\section{Exponential  Bound  of the Kinetic  Energy}
This section is dedicated to find a result for the growth rate of the kinetic energy of a solution to the equation \eqref{eq:base1} with initial data in $H^\frac{1}{2}$. Note that we have already proved a super exponential growth of the norm for any $s > 0$. In particular, if $\psi \in C([0, \infty); H^\frac{1}{2}(\R^2))$, then
\begin{equation} \label{supexp est}
    \|\psi (t)\|_{H^\frac{1}{2}(\R^2)} \lesssim e^{C_1e^{C_2t}}.
\end{equation}
As soon as $\psi(t) \in H^{\frac{1}{2}}$, we gain the total energy conservation, that is, for any $t \in [0,\infty)$,
\begin{equation*}
E(\psi_0) = E(\psi(t)) := \frac{1}{2} \langle D_m \psi(t), \psi(t) \rangle_{L^2(\R^2)} - \frac{1}{4} \| (1 - \Delta)^{-\frac{1}{2}} |\psi(t)|^2 \|^2_{L^2}.
\end{equation*}
So there exists a constant $C>0$ such that
\begin{equation}\label{En cons}
\langle D_m \psi(t), \psi(t) \rangle_{L^2} \leq C  \| (1 - \Delta)^{-\frac{1}{2}} |\psi(t)|^2 \|^2_{L^2}.
\end{equation}
\begin{lem}
For any $\varepsilon >0$, let $p = \frac{2 + 2\varepsilon}{1 + 3\varepsilon} \in (\frac{2}{3},2)$. Then the following inequality is true
\begin{equation}\label{pot est}
\| (1 - \Delta)^{-\frac{1}{2}} |\psi(t)|^2 \|_{L^2} \lesssim \left(\frac{2^\varepsilon}{\varepsilon}\right)^\frac{1}{p}  \| \psi(t)\|^\frac{4\varepsilon}{1 + \varepsilon}_{H^\frac{1}{2}}
\end{equation}
\end{lem}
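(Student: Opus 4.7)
The plan is to realize $(1-\Delta)^{-1/2}$ as convolution with the Bessel kernel $G_1$ of order $1$ in two dimensions, which satisfies $G_1(x)\sim |x|^{-1}$ near the origin and decays exponentially at infinity. Applying Young's convolution inequality with exponents $p,q$ constrained by $1/p+1/q=3/2$, one has
\begin{equation*}
\| (1-\Delta)^{-1/2}|\psi|^2 \|_{L^2}\le \|G_1\|_{L^p}\, \||\psi|^2\|_{L^q}=\|G_1\|_{L^p}\,\|\psi\|_{L^{2q}}^2.
\end{equation*}
The choice $q=1+\varepsilon$ is forced by the form of the claim: it produces exactly $p=(2+2\varepsilon)/(1+3\varepsilon)$ and $2q=2+2\varepsilon$, so the two remaining tasks are to control $\|G_1\|_{L^p}$ and $\|\psi\|_{L^{2+2\varepsilon}}$.

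For the kernel, I would split into the singular and the tail regions. The singular part gives, by direct polar integration,
\begin{equation*}
\int_{|x|\le 1} |x|^{-p}\,dx=\frac{2\pi}{2-p}=\frac{\pi(1+3\varepsilon)}{2\varepsilon},
\end{equation*}
while $\int_{|x|\ge 1} G_1^p\,dx$ is uniformly bounded thanks to exponential decay. Hence $\|G_1\|_{L^p}^{p}\lesssim 1/\varepsilon$, and tracking the elementary constants one obtains $\|G_1\|_{L^p}\lesssim (2^\varepsilon/\varepsilon)^{1/p}$, which is the constant in the claim.

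For the Lebesgue bound on $\psi$, I would combine complex interpolation between $L^2$ and $L^4$ with the critical two-dimensional Sobolev embedding $H^{1/2}(\R^2)\hookrightarrow L^4(\R^2)$. Solving $1/(2+2\varepsilon)=\theta/2+(1-\theta)/4$ gives $\theta=(1-\varepsilon)/(1+\varepsilon)$, whence
\begin{equation*}
\|\psi\|_{L^{2+2\varepsilon}}\le \|\psi\|_{L^2}^{(1-\varepsilon)/(1+\varepsilon)} \|\psi\|_{L^4}^{2\varepsilon/(1+\varepsilon)} \lesssim \|\psi\|_{L^2}^{(1-\varepsilon)/(1+\varepsilon)} \|\psi\|_{H^{1/2}}^{2\varepsilon/(1+\varepsilon)}.
\end{equation*}
Squaring this, combining it with the kernel estimate, and using the conservation $\|\psi(t)\|_{L^2}=\|\psi_0\|_{L^2}$ to absorb the $L^2$ factor into the implicit constant, produces exactly the asserted inequality.

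The main obstacle is cosmetic rather than conceptual: matching the precise constant $(2^\varepsilon/\varepsilon)^{1/p}$ (instead of a generic $(C/\varepsilon)^{1/p}$) forces a careful accounting of the contribution from $|x|\ge 1$ in $\|G_1\|_{L^p}^p$, which is where the $2^\varepsilon$ factor is generated. One structural constraint to flag: Young's inequality as used above requires $p\ge 1$, i.e.\ $\varepsilon\le 1$, which is precisely the regime needed in Section 4 when $\varepsilon$ will eventually be optimized small, so this restriction is harmless for the intended application to the energy estimate \eqref{En cons}.
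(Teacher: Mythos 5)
Your proposal is correct and follows essentially the same route as the paper: Young's inequality with $q=1+\varepsilon$ and the stated $p$, the Bessel-kernel bound (singular part $\sim |x|^{-1}$ plus exponential tail) giving $\|G\|_{L^p}^p\lesssim 2^\varepsilon/\varepsilon$, then $L^2$--$L^4$ interpolation with the embedding $H^{1/2}(\R^2)\hookrightarrow L^4(\R^2)$ and mass conservation. Your remark that $p\ge 1$ forces $\varepsilon\le 1$ is a legitimate refinement the paper only acknowledges implicitly (its final line restricts to $\varepsilon\in(0,1)$ despite the lemma saying ``any $\varepsilon>0$''), and it is indeed harmless for the application.
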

\begin{proof}
Let $G$ be the Kernel of the operator $(1 - \Delta)^{-\frac{1}{2}}$. Then, from the Young inequality, for any $\varepsilon > 0$, it follows that
\begin{equation*}
\| (1 - \Delta)^{-\frac{1}{2}} |\psi(t)|^2 \|_{L^2} =   \| G \ast |\psi(t)|^2 \|_{L^2} \lesssim \| G \|_{L^p} \| |\psi(t)|^2 \|_{L^{1+\varepsilon}}= \| G \|_{L^p} \| \psi(t) \|_{L^{2+2\varepsilon}}^2,
\end{equation*}
where $ p = \frac{2 + 2\varepsilon}{1 + 3\varepsilon} < 2$. It is known (see e.g \cite{Graf}) that the kernel of the Bessel operator $(1 - \Delta)^{-\frac{1}{2}}$ can be estimated as
\begin{equation*}
    G(x) \leq C e^{-\frac{|x|}{2}}  \mathbbm{1}_{|x|\geq 2} + C |x|^{-1} \mathbbm{1}_{|x|\leq 2},
\end{equation*}
and so, for any $\varepsilon >0$, one has
\begin{equation*}
\begin{split}
 \| G(x) \|_{L^p}^p  & \lesssim \int_2^\infty e^{-\frac{r}{2} \frac{2 + 2\varepsilon}{1 + 3\varepsilon}} r dr + \int_0^2 r^{- \frac{2 + 2\varepsilon}{1 + 3\varepsilon} +1} dr \lesssim  \left( \frac{2 + 2\varepsilon}{1 + 3\varepsilon} + 1\right)\left( \frac{1 + \varepsilon}{1 + 3\varepsilon}\right)^{-2} e^{- \frac{2 + 2\varepsilon}{1 + 3\varepsilon}} + \frac{2^\varepsilon}{\varepsilon} \\ &
= \left( \frac{15 \varepsilon^2 + 14 \varepsilon +3}{(1 + \varepsilon)^2} \right) e^{- \frac{2 + 2\varepsilon}{1 + 3\varepsilon}} + \frac{2^\varepsilon}{\varepsilon} \lesssim \frac{2^\varepsilon}{\varepsilon},
  \end{split}
\end{equation*}
and by interpolation and the conservation of the $L^2$ norm, we get that
\begin{equation*}
\| \psi(t)\|^2_{L^{2 + 2\varepsilon}} \lesssim \| \psi(t) \|_{L^2}^\frac{2 - 2\varepsilon}{1 + \varepsilon} \| \psi(t) \|_{L^4}^\frac{4\varepsilon}{1 + \varepsilon} \lesssim \| \psi(t)\|^\frac{4\varepsilon}{1 + \varepsilon}_{H^\frac{1}{2}}.
\end{equation*}
Thus, for any $\varepsilon \in ( 0,1) $, it follows that
\begin{equation}
\| (1 - \Delta)^{-\frac{1}{2}} |\psi(t)|^2 \|_{L^2} \lesssim \left(\frac{2^\varepsilon}{\varepsilon}\right)^\frac{1}{p}  \| \psi(t)\|^\frac{4\varepsilon}{1 + \varepsilon}_{H^\frac{1}{2}}
\end{equation}
\end{proof}
For any $T>0$ and any $t \in [0,T]$, from \eqref{En cons}, \eqref{pot est} and \eqref{supexp est}, one has
\begin{equation*}
\langle D_m \psi(t), \psi(t) \rangle_{L^2} \lesssim {2^\frac{2\varepsilon}{p}} \varepsilon^{- \frac{2}{p}} \| \psi(t)\|^\frac{8\varepsilon}{1 + \varepsilon}_{H^\frac{1}{2}} \lesssim 2^{ \varepsilon \frac{1 + 3 \varepsilon}{1 + \varepsilon}} \varepsilon^{- \frac{1 + 3\varepsilon}{1 + \varepsilon}} e^{\frac{8\varepsilon}{1 + \varepsilon}e^{ t}} \lesssim 2^{ \varepsilon \frac{1 + 3 \varepsilon}{1 + \varepsilon}} \varepsilon^{- \frac{1 + 3\varepsilon}{1 + \varepsilon}} e^{\frac{8\varepsilon}{1 + \varepsilon}C_1e^{C_2T}},
\end{equation*}
and since for $\varepsilon>0$, $\frac{1 + 3 \varepsilon}{1 + \varepsilon} = \frac{2}{p} \in (1,3)$, it easily follows that there exists a constant $C>0$, independent of $\varepsilon$, such that
\begin{equation*}
\sup_{t \in [0,T]} \left|\langle D_m \psi(t), \psi(t) \rangle_{L^2} \right| \leq C e^\varepsilon \varepsilon^{-\frac{p}{2}}  e^{\frac{8\varepsilon}{1 + \varepsilon}C_1e^{C_2 T}}
\end{equation*}
We choose $\varepsilon = \frac{1}{C_1} e^{- C_2 T}$ with $T$ sufficiently large so that $ \varepsilon \in (0,1)$ and we get
\begin{equation*}
\sup_{t \in [0,T]} \left|\langle D_m \psi(t), \psi(t) \rangle_{L^2} \right| \leq
C e^{\frac{1}{C_1} e^{- C_2 T}} \left( C_1 e^{- C_2 T} \right)^{-\frac{p}{2}} e^{\frac{8}{1 + \varepsilon}} \lesssim e^{C_2 \frac{ p}{2}T},
\end{equation*}
where $C_2$ depends on $\|\psi_0\|_{H^\frac{1}{2}}$ and not on $T$. Since $p < 2$ we get \eqref{supexp estmm}.
\begin{remark}
In \cite{T}, it was proved that the solution to \eqref{eq:base2} scatters for for every $s > 0$, whenever the $H^s$ norm of the initial datum is small enough. In particular, as soon as the $\psi_0 \in H^\frac{1}{2}$ and there exists $\sigma \in (0 ,\frac{1}{2}]$, such that $\psi$ scatters in $H^\sigma$ (for example when $\| \psi_0 \|_{H^\sigma}$ is small enough), one can also obtain, by Sobolev's embedding, the following estimate
\begin{equation}\label{exp growth}
\| (1 - \Delta)^{-\frac{1}{2}} |\psi(t)|^2 \|_{L^2} \lesssim \| G \|_{L^p} \| \psi(t) \|_{L^{2+2\varepsilon}}^2 \lesssim C_{\sigma} \| \psi(t) \|_{H^\sigma}^2,
\end{equation}
where we have chosen $\varepsilon = \frac{\sigma}{1 - \sigma}$. This means that, eventually, in this case, the exponential growth estimate \eqref{exp growth} is not sharp and the kynetic energy will actually be bounded by a constant for all the times $t \in [0, \infty)$.
\end{remark}
\section{Appendix: Conservation of Mass}

\begin{lem} \label{l:m1}
If $\psi \in C([0,T);L^2(\R^2))$ is a solution to the equation \eqref{eq:base2} with initial datum $\psi_0$ then
for any $t \in [0,T)$ we have
\begin{equation} \label{eq.a0}
 \|\psi(t)\|_{L^2} = \|\psi_0\|_{L^2}.
\end{equation}
\end{lem}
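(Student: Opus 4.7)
The plan is to obtain conservation of mass by a smooth-approximation argument, since at regularity $\psi\in C([0,T);H^s)$ with $s>0$ small, the vector $\mathcal{D}_m\psi$ need not lie in $L^2$ and one cannot justify differentiating $\|\psi(t)\|_{L^2}^2$ against \eqref{eq:base1} term by term. First I would approximate $\psi_0\in H^s$ by a sequence $\psi_0^n\in H^1(\R^2,\C^2)$ with $\psi_0^n\to\psi_0$ in $H^s$, for instance via a smooth Fourier cutoff. Applying Theorem \ref{local existence} at regularity $H^1$ produces solutions $\psi^n\in C([0,T_n);H^1)$, and a persistence-of-regularity argument extends each lifespan up to the interval $[0,T)$ on which $\psi$ exists. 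The Lipschitz dependence built into the contraction then yields $\psi^n\to\psi$ in $C([0,T);H^s)$, hence in $C([0,T);L^2)$.

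For each $n$ the regularity $\psi^n\in C([0,T);H^1)$ places both $\mathcal{D}_m\psi^n$ and $V^n\gamma^0\psi^n$, with $V^n:=(b-\Delta)^{-1}|\psi^n|^2$, in $C([0,T);L^2)$, so $\psi^n\in C^1([0,T);L^2)$ and one computes directly
\begin{equation*}
\frac{d}{dt}\|\psi^n(t)\|_{L^2}^2 = 2\,\mathrm{Re}\,\langle \partial_t\psi^n,\psi^n\rangle_{L^2} = 2\,\mathrm{Re}\,\bigl(-\mathrm{i}\langle \mathcal{D}_m\psi^n,\psi^n\rangle_{L^2} + \mathrm{i}\langle V^n\gamma^0\psi^n,\psi^n\rangle_{L^2}\bigr).
\end{equation*}
Self-adjointness of $\mathcal{D}_m$ makes $\langle \mathcal{D}_m\psi^n,\psi^n\rangle_{L^2}$ real; the potential $V^n$ is real-valued because $|\psi^n|^2\geq 0$ and $(b-\Delta)^{-1}$ is convolution with a real Bessel-type kernel; and $\langle\gamma^0\psi^n,\psi^n\rangle_{\C^2}=|\psi^n_1|^2-|\psi^n_2|^2$ is pointwise real. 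Both inner products on the right are therefore real, the time derivative vanishes, and $\|\psi^n(t)\|_{L^2}=\|\psi_0^n\|_{L^2}$ for every $t\in[0,T)$.

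The limit step then follows at once: $\psi^n(t)\to\psi(t)$ in $L^2$ gives $\|\psi^n(t)\|_{L^2}\to\|\psi(t)\|_{L^2}$ for each $t\in[0,T)$, while $\|\psi_0^n\|_{L^2}\to\|\psi_0\|_{L^2}$, which yields \eqref{eq.a0}.

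The main obstacle is the persistence-of-regularity step, since Theorem \ref{local existence} applied at $H^1$ produces a lifespan that depends on $\|\psi_0^n\|_{H^1}$ and may shrink as $n\to\infty$. To remove this difficulty I would rerun the log-Gronwall argument from the Global Existence proof at the $H^1$ level: starting from the analogue of \eqref{eq,b} with $s$ replaced by $1$, controlling the nonlinear factor by Theorem \ref{T1} and the uniform-in-$n$ $H^s$ bound on $\psi^n$ already established on $[0,T)$, one closes a differential inequality that forces $\|\psi^n(t)\|_{H^1}$ to remain finite (with at worst a double-exponential growth) on $[0,T)$ independently of $n$. This extends every $\psi^n$ up to time $T$ and makes the approximation scheme work.
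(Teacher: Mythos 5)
Your proof is correct in substance but takes a genuinely different route from the paper. The paper uses Ozawa's device (\cite{O}): it rewrites the Duhamel formula as $e^{\mathrm{i}t\mathcal{D}_m}\psi(t)=\psi_0+\mathrm{i}\int_0^t e^{\mathrm{i}s\mathcal{D}_m}F(\psi)(s)\,ds$, expands $\|\psi(t)\|_{L^2}^2$, rewrites the square of the time integral via the identity $\bigl\|\int_0^t g(s)\,ds\bigr\|^2=2\,\mathrm{Re}\int_0^t\bigl\langle g(s),\int_0^s g(s')\,ds'\bigr\rangle\,ds$, and then substitutes the integral equation back in to eliminate the inner integral; the only structural input is the same reality fact you exploit, namely that $\langle F(\psi)(s),\psi(s)\rangle_{L^2}$ is purely real. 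This is a purely algebraic manipulation of the integral equation: no regularization, no time differentiation, and it applies to any solution in $C([0,T);L^2)$, exactly at the regularity at which Lemma \ref{l:m1} is stated. Your regularize--differentiate--pass-to-the-limit scheme is the classical alternative and is workable: the $H^1$-level differentiation is justified (for $\psi^n\in C([0,T);H^1)$ one indeed has $V^n\gamma^0\psi^n\in C([0,T);H^1)$, hence $\psi^n\in C^1([0,T);L^2)$), and your persistence worry is in fact already resolved by the paper's Global Existence theorem applied with $s=1$, which makes each $\psi^n$ global --- your log-Gronwall rerun is only needed to get the uniform-in-$n$ $H^s$ bounds on compact time intervals that let you iterate the local Lipschitz dependence and conclude $\psi^n\to\psi$ in $C([0,t];L^2)$ for each $t<T$. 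The trade-off is twofold: your argument carries the full well-posedness machinery where the paper needs only an algebraic identity, and, more importantly, it proves conservation only for solutions produced by the contraction from $H^s$ data. Since uniqueness is not known in $C([0,T);L^2)$ alone, an arbitrary $L^2$ solution of \eqref{eq:base2} cannot be identified with the limit of your regularized solutions, so strictly speaking you establish a slightly weaker statement than the lemma as phrased --- though one fully sufficient for its use in the proof of Theorem \ref{local existence}.
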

\begin{proof}
We follow the idea from \cite{O}. For the purpose we rewrite \eqref{eq:base2} in the form
\begin{alignat}{2}\label{eq:a1}
  e^{\mathrm{i}t\mathcal{D}_m}\psi(t) = \psi_0  + \mathrm{i}\int_0^t e^{\mathrm{i}s\mathcal{D}_m} F(\psi)(s) ds,
\end{alignat}
where
$$ F(\psi)(s) = (V \ast \langle \gamma^0 \psi(s),  \psi(s) \rangle) \gamma^0\psi(s), $$
so taking the square in $L^2,$ we find
\begin{alignat}{2}\label{eq.a1a}
   \|\psi(t)\|_{L^2}^2 = \|\psi_0\|_{L^2}^2 + \left\|\int_0^t e^{\mathrm{i}s\mathcal{D}_m} F(\psi)(s) ds \right\|^2_{L^2} + 2 \mathrm{Im} \left\langle \psi_0, \int_0^t e^{\mathrm{i}s\mathcal{D}_m} F(\psi)(s) ds \right\rangle_{L^2}.
\end{alignat}
For any function $g(s) \in C([0,T];H)$ with $H$ a Hilbert space we can use the relation
$$ \left\|\int_0^t g(s) ds \right\|^2_{L^2} = 2 \mathrm{Re} \iint_{t>s > s^\prime>0} \langle g(s), g(s^\prime) \rangle_H ds ds^\prime = 2 \mathrm{Re} \int_0^t \langle g(s), \int_0^s g(s^\prime ) ds^\prime \rangle_H ds.$$
Then we can write
$$ \left\|\int_0^t e^{\mathrm{i}s\mathcal{D}_m} F(\psi)(s) ds \right\|^2_{L^2}= 2 \mathrm{Re}
 \int_0^t \langle e^{\mathrm{i}s\mathcal{D}_m} F(\psi)(s), \int_0^s e^{\mathrm{i}s^\prime\mathcal{D}_m} F(\psi)(s^\prime) ds^\prime \rangle_{L^2} ds
 $$  $$ = 2 \mathrm{Re}  \int_0^t \langle  F(\psi)(s), \int_0^s e^{-\mathrm{i}(s-s^\prime)\mathcal{D}_m} F(\psi)(s^\prime) ds^\prime \rangle_{L^2} ds $$
 and we are in position to use the integral equation
 $$  \int_0^s e^{-\mathrm{i}(s-s^\prime)\mathcal{D}_m} F(\psi)(s^\prime) ds^\prime = -\mathrm{i }\psi(s)+\mathrm{i} e^{-\mathrm{i}s\mathcal{D}_m}\psi_0  $$
 so we have
 $$\left\|\int_0^t e^{\mathrm{i}s\mathcal{D}_m} F(\psi)(s) ds \right\|^2_{L^2} = 2 \mathrm{Re}  \int_0^t \langle  F(\psi)(s), -\mathrm{i }\psi(s)\rangle_{L^2} ds +  2 \mathrm{Re}  \int_0^t \langle  F(\psi)(s),\mathrm{i} e^{-\mathrm{i}s\mathcal{D}_m} \psi_0 \rangle_{L^2} ds. $$
 Now we can take advantage of the fact that $ \langle  F(\psi)(s), \psi(s)\rangle_{L^2} $ is purely real and hence
 \begin{alignat*}{2}\label{eq.a4}
    \left\|\int_0^t e^{\mathrm{i}s\mathcal{D}_m} F(\psi)(s) ds \right\|^2_{L^2} & =  2 \mathrm{Re}  \int_0^t \langle  F(\psi)(s),\mathrm{i} e^{-\mathrm{i}s\mathcal{D}_m}f\rangle_{L^2} ds = \\ \nonumber
  & = 2 \mathrm{Im}  \int_0^t \langle e^{-\mathrm{i}s\mathcal{D}_m} F(\psi)(s), f\rangle_{L^2} ds =
  - 2 \mathrm{Im}  \int_0^t \langle f, e^{-\mathrm{i}s\mathcal{D}_m} F(\psi)(s)\rangle_{L^2} ds
 \end{alignat*}
 and from \eqref{eq.a1a} we arrive at the desired identity \eqref{eq.a0}.
\end{proof}

\subsection{Acknowledgements}

The first  author was supported in part by  INDAM, GNAMPA - Gruppo Nazionale per l'Analisi Matematica, la Probabilit\`a e le loro Applicazioni, by Institute of Mathematics and Informatics, Bulgarian Academy of Sciences, by Top Global University Project, Waseda University,  the Project PRA 2018 49 of  University of Pisa and project "Dinamica di equazioni nonlineari dispersive", "Fondazione di Sardegna", 2016.

\end{document}